\newtheorem{theorem}{Theorem}[section]
\newtheorem{lemma}[theorem]{Lemma}
\newtheorem{proposition}[theorem]{Proposition}
\newtheorem{corollary}[theorem]{Corollary}
\newtheorem{definition}[theorem]{Definition}
\newcommand{\beq}[1]{\begin{equation}\label{#1}}
\newcommand{\enq}[0]{\end{equation}}
\newcommand{\C}[2]{{{#1}\choose{{#2}}}}
\def\E{\mathop{\mathbb E}}
\newcommand{\A}{\mathcal A}
\newcommand{\B}{\mathcal B}
\newcommand{\M}{\mathcal M}
\newcommand{\N}{\mathcal N}
\newcommand{\remove}[1]{}
\begin{document}
\title{Thresholds and expectation-thresholds of monotone properties with small minterms}
\author{Ehud Friedgut \and Jeff Kahn  \and Clara Shikhelman}

\setlength{\parskip}{1ex plus 0.5ex minus 0.2ex}

\maketitle

\begin{abstract}
Let $N$ be a finite set, let $p \in (0,1)$, and let $N_p$ denote a random binomial subset of $N$ where every element of $N$ is taken to belong to the subset independently with probability $p$ . This defines a product measure $\mu_p$ on the power set of $N$, where for $\A \subseteq 2^N$ $\mu_p(\A) := Pr[N_p \in \A]$.

In this paper we study upward-closed families $\A$ for which all minimal sets in $\A$ have size at most $k$, for some positive integer $k$. We prove that for such a family $\mu_p(\A) / p^k $ is a decreasing function, which implies a uniform bound on the coarseness of the thresholds of such families. 

We also prove a structure theorem which enables one to identify in $\A$ either a substantial subfamily $\A_0$ for which the first moment method gives a good approximation of its measure, or a subfamily which can be well approximated by a family with all minimal sets of size strictly smaller than $k$.

Finally, we relate the (fractional) expectation threshold and the probability threshold of such a family, using duality of linear programming. This is related to the threshold conjecture of \cite{KK}.

\end{abstract}\section{Introduction}
 One of the fundamental phenomena in random graph theory is that of thresholds of monotone properties. This dates back to the seminal papers of Erd\H{o}s and Renyi \cite{ER59,ER60} who defined the notion of thresholds and discovered that for many interesting graph properties the probability of the property appearing in the random binomial graph $G(n,p)$, for large $n$, behaves much like a step function of the edge probability $p$, increasing from 0 to 1 abruptly as $p$ is varied slightly.
The study of thresholds of random structures in general, and in random graphs specifically, has been a thriving area ever since, and thousands of papers have covered related problems. Bollob\`as and Thomason \cite{BT} showed that every monotone property of sets has a threshold function and, using the Kruskal-Katona theorem, gave optimal quantification of such thresholds. In \cite{FK} it was observed that the KKL theorem \cite{KKL}, and its extension in \cite{BKKKL} imply sharp thresholds for properties which are symmetric under the action of a group on the elements of the ground set, in particular for graph properties.

For most interesting families of graph properties the threshold function $p(n)$ tends to zero as $n$ tends to infinity. In this case it is of interest to study the sharpness of the threshold. Fixing a graph property $\A$, and a parameter $\epsilon$, one may ask what is the width of the interval of values of $p$ in which the probability of $G(n,p)$ having property $\A$ climbs from $\epsilon$ to $1-\epsilon$. The scale in which this width is measured is with respect to the value of $p$ for which the probability of $G(n,p) \in \A$ is, say, 1/2. For a series of properties $\A_n$, of graphs on $n$ vertices,  we will say that the threshold is sharp if the ratio between the width of the threshold interval and the critical $p$ tends to 0. We will shortly give a more precise definition of sharp thresholds in a more general setting.

In his Ph.D. thesis, the first author \cite{F99} gave a necessary condition for monotone graph properties to have a sharp threshold.  Roughly speaking, if a property does not have a sharp threshold it must be well approximable by a {\em local property} (e.g. containing a triangle), as opposed to properties that are {\em global} (e.g. connectivity) and cannot be well approximated by the property of containing a subgraph from a fixed given list.  In the appendix to \cite{F99} Bourgain proved a similar statement, with a slightly weaker conclusion, in a much more general setting, without the assumption of symmetry. In a recent paper Hatami, \cite{Hatami} gives a common generalization of these two results. 

Returning to the question of thresholds of local properties, the appearance of any fixed subgraph in $G(n,p)$ has a coarse threshold, and this is well understood, see Bollob\'as' paper \cite{B} for a complete description.  Roughly speaking, if a fixed graph $H$ is strictly balanced then the number of copies of $H$ in $G(n,p)$ will be approximately Poisson, and the governing parameter, the expectation of the random variable, will be of order $p^{|E(H)|}n^{|V(H)|}$, which varies smoothly with $p$: when $p$ is multiplied by a constant $c$, the expectation changes by a factor of $c^{|E(H)|}$. When $H$ is not balanced the situation is only slightly more complicated, and appearance of copies of $H$ in $G(n,p)$ can be understood by studying the appearances of the densest subgraphs of $H$. Intuitively, if $H$ has a subgraph $H'$ which is much denser than $H$, then every copy of $H'$ that appears in $G(n,p)$ is extremely likely to be contained in many copies of $H$. 
Kahn and Kalai, \cite{KK}  have a far reaching conjecture as to the generalization of this to general monotone properties. In the graph setting their conjecture regards families defined by graphs with size which is not fixed (i.e. such as  Hamiltonian cycles). In a nutshell, they conjecture that for such families there is at most a logarithmic gap between the threshold probability for the appearance of a graph from the family, and the probability at which the expectation is constant (once again, taking into account the densest subgraphs).

The basic question which led to the writing of this paper was: how specific is this behavior to graphs? The proofs of this behavior use the symmetry of graphs very strongly, yet it seemed possible that something similar should hold also for properties of random binomial subsets of a ground set without any symmetry assumptions.
This would imply a converse to the main theorems of \cite{F99} and its appendix: not only does a non-sharp threshold imply that the property in question has local nature,  but also any property determined by small minimal sets has a non-sharp threshold. 

We will prove in this paper that this indeed is the case.   

\section{Setting and main results}

 Let $[n] $ denote the set $\{0,1,\ldots,n\}$, and let $[n]_p$ denote a random subset of $[n]$, where each element is chosen independently with probability $p$.
A family of sets $\A$ is called {\em monotone} if whenever $A \in \A$ and $A \subset B$, then $B \in \A$. For such a family, a set which is minimal with respect to inclusion is called a {\em minterm}.
For $\A$, a family of subsets of $[n]$ and $p \in [0,1]$ we define $\mu(\A,p)$ to be the probability that $[n]_p \in \A$. Note that if $\A$ is monotone then this function is monotone in $p$. We will also use the notation $\mu_p$ to denote the measure $\mu( \cdot, p)$.

For a fixed non-trivial monotone family $\A$ and any $x \in [0,1]$ we define $p_x$ to be the unique number such that $\mu(\A, p_x)=x$. For $0< \epsilon <1 $  define
$\delta_\epsilon(\A) = \frac{p_{1/2}-p_\epsilon}{p_{1/2}}$. The numerator, is the length of the {\em threshold interval} in which the probability of $\A$ climbs from $\epsilon$ to $1/2$. The denominator, $p_{1/2}$, supplies the correct yardstick with which to measure this length. The slower $\delta_\epsilon (\A)$ tends to 1 as $\epsilon$ tends to 0, the sharper the threshold is (in other words, the threshold interval is small), the faster it tends to 1 the coarser the threshold is. (Note that it also would make sense to study the interval $[p_\epsilon,  p_{1-\epsilon}]$, however our choice gives a neater normalization, bounding $\delta_\epsilon (\A)$ between 0 and 1).

\begin{theorem}\label{thm:decrease}
Let $\A$ be a monotone family of subsets of $[n]$, with all minterms of size at most $k$. Then the function 
$\frac{\mu(\A,p)}{p^k}$ is monotone decreasing. Consequently 
$$\delta_\epsilon(\A)\ge 1-(2\epsilon)^\frac{1}{k}.$$
\end{theorem}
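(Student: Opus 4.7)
The plan is to reduce the monotonicity of $\mu(\A,p)/p^k$ to a differential inequality $p\cdot \mu'(\A,p) \leq k\cdot \mu(\A,p)$, and then exploit the small-minterm hypothesis via Russo-style reasoning about pivotal elements.

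First, since $\mu(\A,p)$ is a polynomial in $p$, the derivative of $\mu(\A,p)/p^k$ has the same sign as $p\,\mu'(\A,p) - k\,\mu(\A,p)$. So it suffices to show $p\,\mu'(\A,p) \leq k\,\mu(\A,p)$ for all $p \in (0,1)$. By Russo's formula,
\[
\mu'(\A,p) \;=\; \sum_{i \in [n]} I_i(\A,p),
\]
where $I_i(\A,p) = \Pr_p[i\text{ is pivotal for }\A]$. Multiplying by $p$, the quantity $p\cdot I_i(\A,p)$ equals the probability that $i \in [n]_p$ \emph{and} $i$ is pivotal, so summing over $i$ gives
\[
p\cdot \mu'(\A,p) \;=\; \E_p\bigl[\,\#\{i \in [n]_p : i \text{ is pivotal for }\A\}\bigr].
\]

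The key step is to bound this count by $k\cdot \mathbf{1}_{[n]_p \in \A}$. Assume $[n]_p \in \A$. For a monotone family, an element $i \in [n]_p$ is pivotal iff $[n]_p \setminus \{i\} \notin \A$, which holds iff every minterm $M \subseteq [n]_p$ contains $i$. Hence the set of pivotal elements in $[n]_p$ equals
\[
\bigcap_{\substack{M \text{ minterm} \\ M \subseteq [n]_p}} M,
\]
and in particular is contained in any single such minterm. Since every minterm of $\A$ has size at most $k$, this intersection has size at most $k$. On the other hand, if $[n]_p \notin \A$ then no element of $[n]_p$ is pivotal. Combining, the random variable counting pivotal elements in $[n]_p$ is bounded by $k\cdot \mathbf{1}_{[n]_p \in \A}$, and taking expectations yields $p\cdot \mu'(\A,p) \leq k\cdot \mu(\A,p)$, as desired.

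For the consequence, since $\mu(\A,p)/p^k$ is decreasing and $p_\epsilon \leq p_{1/2}$,
\[
\frac{\epsilon}{p_\epsilon^k} \;=\; \frac{\mu(\A,p_\epsilon)}{p_\epsilon^k} \;\geq\; \frac{\mu(\A,p_{1/2})}{p_{1/2}^k} \;=\; \frac{1}{2\,p_{1/2}^k},
\]
so $p_\epsilon \leq (2\epsilon)^{1/k}\,p_{1/2}$ and $\delta_\epsilon(\A) = 1 - p_\epsilon/p_{1/2} \geq 1 - (2\epsilon)^{1/k}$. I do not anticipate a genuine obstacle: the only subtlety is recognizing that for a monotone family the pivotal elements of an accepting configuration form the \emph{intersection} of the minterms it contains, which is what turns the hypothesis ``all minterms have size $\leq k$'' into the pointwise bound needed to close the argument.
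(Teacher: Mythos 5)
Your proof is correct and follows essentially the same route as the paper: combine the Margulis--Russo formula $p\,\mu'(\A,p)=\E[Piv([n]_p)]$ with the observation that $Piv([n]_p)\le k\cdot\mathbf{1}_{[n]_p\in\A}$, then differentiate $\mu/p^k$. Your explicit justification that the pivotal set is $\bigcap_{M\subseteq[n]_p}M$ (hence at most $k$) is a nice articulation of the step the paper leaves as an assertion, but it is the same argument.
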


The simple derivation of this theorem from the Margulis-Russo lemma was pointed out to us by Oliver Riordan.
Note that the theorem is tight, e.g., for a family with a single minterm.
We present the proof of Theorem \ref{thm:decrease} in Section \ref{sec:lowerbound} below.
An upper bound on $\delta_{\epsilon}$ will follow from a different approach which we present in Section
\ref{sec:LP}:
\begin{theorem}\label{thm:upperbound}
$$1-2\epsilon \frac {(k-1)^{k-1}}{k^k} \ge\delta_\epsilon(\A) .$$
\end{theorem}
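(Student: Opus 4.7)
The plan is to derive the upper bound on $\delta_\epsilon(\A)$ by establishing a lower bound on the derivative $\mu_p'(\A)$ via linear-programming duality applied to the minterm structure of $\A$, then integrating the resulting differential inequality. Theorem~\ref{thm:decrease} already provides the complementary upper bound $\mu_p'(\A) \le k\mu_p(\A)/p$, so what is needed here is a lower bound on $\mu_p'(\A)$ that forces $\mu_p$ to grow at a definite rate between $p_\epsilon$ and $p_{1/2}$.

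The first step is to apply the Margulis--Russo formula $\mu_p'(\A) = \sum_i I_i(\A,p)$ and to express each influence $I_i$ as a sum over minterms of $\A$ containing $i$. I would then set up an LP whose primal is a fractional relaxation of the minterm structure: variables weight minterms, while constraints encode that every set in $\A$ contains some minterm of size at most $k$. LP duality should produce a lower bound of the form $\mu_p'(\A) \ge g(\mu_p(\A))/[p(1-p)^{k-1}]$ for an appropriate positive function $g$ (most naturally $g(\mu)=\mu$ or $g(\mu)=\mu(1-\mu)$, depending on the LP formulation). The exponent $k-1$ on $(1-p)$ is the critical consequence of the hypothesis $|M|\le k$: for each minterm $M$ of size at most $k$, the natural contribution to the dual LP involves the $|M|-1 \le k-1$ ``witness'' elements of $M$ besides the pivotal one, each contributing a $(1-p)$ factor when absent from the random sample.

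The second step is to integrate this differential inequality from $p_\epsilon$ to $p_{1/2}$. The key arithmetic observation is that $p(1-p)^{k-1}$ attains its maximum value $(k-1)^{k-1}/k^k$ on $(0,1)$ at the interior point $p=1/k$, so its reciprocal attains the minimum $k^k/(k-1)^{k-1}$ there. Integrating and using this minimum gives
\[
\int_\epsilon^{1/2}\frac{d\mu}{g(\mu)} \;\ge\; \frac{k^k}{(k-1)^{k-1}}\,(p_{1/2}-p_\epsilon).
\]
Evaluating the left-hand side according to the choice of $g$ (which should contribute a factor bounded by a linear multiple of $p_{1/2}(1-2\epsilon)$ after appropriate manipulation) and dividing by $p_{1/2}$ yields the claimed inequality $\delta_\epsilon(\A)\le 1-2\epsilon(k-1)^{k-1}/k^k$.

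The main obstacle is identifying the correct LP in the first step, so that its dual produces the factor $(1-p)^{k-1}$ rather than the weaker $(1-p)$ that a naive Poincar\'e or Efron--Stein argument would yield. The LP must translate the structural hypothesis $|M|\le k$ into a quantitative improvement in the dual bound, leveraging all $k-1$ non-pivotal coordinates of each minterm contribution rather than only the single pivotal coordinate exploited by the standard variance inequality.
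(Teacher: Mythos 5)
Your proposal takes a genuinely different route from the paper, but it has two gaps, one of which is fatal.

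The paper does \emph{not} prove Theorem~\ref{thm:upperbound} by bounding $\mu_p'$ and integrating. Instead, it first establishes Theorem~\ref{thm:LP}, a non-infinitesimal comparison $\mu_p(\A)\ge {\E}^*_{\alpha p}(\A)(1+\alpha)^{-k}$, by LP duality applied to the \emph{fractional expectation threshold} (weights $\beta$ on arbitrary subsets $B\subseteq[n]$ with $\sum_{B\subseteq A}\beta(B)\ge 1$ for minterms $A$, dualized to a spread measure $\nu$ on minterms), followed by a weighted Paley--Zygmund estimate. The bound on $\delta_\epsilon$ then follows from the chain $p_a\le q_{a(1+\alpha)^k}/\alpha\le q_b\,\tfrac{(1+\alpha)^k}{\alpha}\tfrac{a}{b}\le p_b\,\tfrac{(1+\alpha)^k}{\alpha}\tfrac{a}{b}$, optimizing $\min_\alpha (1+\alpha)^k/\alpha=k^k/(k-1)^{k-1}$ at $\alpha=1/(k-1)$. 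Your observation that $\max_p p(1-p)^{k-1}=(k-1)^{k-1}/k^k$ is the reciprocal of this is a nice numerical coincidence, but the underlying mechanisms are different: the paper compares $\mu$ across multiplicatively separated scales $p$ and $\alpha p$, while you work infinitesimally.

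The first gap: the proposed differential inequality $\mu_p'\ge g(\mu_p)/[p(1-p)^{k-1}]$ with a family-independent $g$ fails for the natural candidate $g(\mu)=\mu$. Take $k=1$ and $\A$ the family of nonempty subsets of $[n]$: then $p\mu_p'=\E[\mathrm{Piv}]=np(1-p)^{n-1}$, while $\mu_p=1-(1-p)^n$, and at $p\approx 1/n$ one has $np(1-p)^{n-1}\approx 1/e<1-1/e\approx\mu_p$. You flag that identifying the right LP is ``the main obstacle,'' but no choice of family-independent $g$ is verified.

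The second, more fundamental gap is that the integration scheme cannot yield the theorem even if some valid $g$ existed. Integrating gives $\int_\epsilon^{1/2}\tfrac{d\mu}{g(\mu)}\ge\int_{p_\epsilon}^{p_{1/2}}\tfrac{dp}{p(1-p)^{k-1}}\ge\tfrac{k^k}{(k-1)^{k-1}}(p_{1/2}-p_\epsilon)$, hence an \emph{absolute} upper bound on $p_{1/2}-p_\epsilon$ by a constant depending only on $\epsilon$ and $k$. But the theorem is a \emph{relative} bound: $\delta_\epsilon=1-p_\epsilon/p_{1/2}$ compares $p_\epsilon$ to $p_{1/2}$, and $p_{1/2}$ can be arbitrarily small (e.g.\ polynomially small in $n$). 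Your parenthetical hope that $\int_\epsilon^{1/2} d\mu/g(\mu)$ is ``bounded by a linear multiple of $p_{1/2}(1-2\epsilon)$'' cannot hold, since the left side is a fixed number independent of the family while $p_{1/2}$ is not. Passing from $\tfrac{1}{p(1-p)^{k-1}}$ to its pointwise minimum $\tfrac{k^k}{(k-1)^{k-1}}$ throws away exactly the $1/p$ factor that, upon integration, would have produced the logarithm of the ratio $p_{1/2}/p_\epsilon$ that a relative bound requires. The paper's multiplicative comparison avoids this entirely, which is why it succeeds.
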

To present the results of Section \ref{sec:LP} we first need a definition. For a monotone family $\A \subseteq P([n])$ with a set of minterms $\M$ we define $\E_p(\A)$ to be the expected number of minterms of $\A$ that are contained  in the random set $[n]_p$. $\E_p(\A)$ is the expectation of the function $f(A) = \sum_{M \in \M} {\bf 1}_{A \subset M}$, which takes positive integer values on all $A$ in $\A$, and consequently gives an upper bound on $\mu(p,\A)$.
This upper bound can be tightened by a fractional version, which we call the {\em fractional expectation } of $\A$.
\begin{definition}
For a monotone family $\A \subset P([n])$, with a set of minterms $\M$, we define the fractional expectation of $\A$ with respect to $\mu_p$ to be
$$
{\E}^*_p (\A) =\min \sum \beta(B) p^{|B|}, 
$$
where the minimum is taken over all functions $\beta : P([n]) \rightarrow \mathbf{R}^{\geq 0}$ such that 
$$
\sum_{B \subseteq A} \beta(B) \ge 1 \text { for all } A \in \M.
$$
\end{definition}
If $\beta$ is a function for which the minimum in the definition of the fractional expectation is achieved, then ${\E}^*(\A)$ is  the expectation of the function  
$g(A) = \sum_{B} \beta(B) {\bf 1}_{B \subseteq A}$, which is non negative and assumes values at least 1 on all $A$ in $\A$, hence this too gives a (better) upper bound on $\mu(\A)$, namely
$$
{\E}_p(\A) \ge {\E}^*_p(\A) \ge \mu_p(\A).
$$
The main result of Section \ref{sec:LP} is that for monotone families with minterms of bounded size this bound is not too far off mark.
\begin{theorem}\label{thm:LP}
Let $\A$ be a monotone family with all minterms of size at most $k$. Then for any $\alpha >0$
$$
{\E}^*_p(\A) \ge \mu_p(\A) \ge {\E}^*_{\alpha p}(\A)(1+\alpha)^{-k}.
$$
\end{theorem}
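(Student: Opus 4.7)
The first inequality $\E^*_p(\A)\ge\mu_p(\A)$ is already established in the paragraph preceding the statement (a primal-feasible $\beta$ gives $g\ge\mathbf 1_\A$ pointwise, hence $\E_p[g]\ge\mu_p(\A)$), so the entire content lies in the lower bound $\mu_p(\A)\ge(1+\alpha)^{-k}\E^*_{\alpha p}(\A)$. The plan is to pass to the LP dual of the fractional expectation. The primal defining $\E^*_{\alpha p}$ is a covering LP, so its dual maximises $\sum_{A\in\M}\gamma(A)$ over non-negative $\gamma:\M\to\mathbb{R}^{\ge 0}$ subject to the packing constraints $\sum_{A\supseteq B}\gamma(A)\le(\alpha p)^{|B|}$ for every $B\subseteq[n]$; fix an optimal $\gamma$, so $\sum_A\gamma(A)=\E^*_{\alpha p}(\A)$.

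Introduce the non-negative random variable $Z=\sum_{A\in\M}\gamma(A)\,p^{-|A|}\,\mathbf 1_{A\subseteq[n]_p}$. Since $\gamma$ is supported on minterms, $\{Z>0\}\subseteq\{[n]_p\in\A\}$, so $\mu_p(\A)\ge\Pr[Z>0]$; the weighting $p^{-|A|}$ is chosen so that $\E[Z]=\sum_A\gamma(A)=\E^*_{\alpha p}(\A)$. I then apply the Paley--Zygmund inequality $\Pr[Z>0]\ge(\E Z)^2/\E[Z^2]$, reducing the problem to an upper bound on $\E[Z^2]$ obtained from dual feasibility.

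The second-moment computation is
\[
\E[Z^2]=\sum_{A,A'\in\M}\gamma(A)\gamma(A')\,p^{|A\cup A'|-|A|-|A'|}=\sum_{A,A'}\gamma(A)\gamma(A')\,p^{-|A\cap A'|}.
\]
For fixed $A$, grouping the inner sum by $C=A\cap A'$ and relaxing the equality to the containment $A'\supseteq C$ yields
\[
\sum_{A'}\gamma(A')\,p^{-|A\cap A'|}\le\sum_{C\subseteq A}p^{-|C|}\sum_{A'\supseteq C}\gamma(A')\le\sum_{C\subseteq A}p^{-|C|}(\alpha p)^{|C|}=(1+\alpha)^{|A|}\le(1+\alpha)^k,
\]
where the second inequality is dual feasibility, the following equality is the binomial theorem, and the last step uses $|A|\le k$. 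Summing over $A$ weighted by $\gamma(A)$ gives $\E[Z^2]\le(1+\alpha)^k\sum_A\gamma(A)=(1+\alpha)^k\E^*_{\alpha p}(\A)$.

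Combining the ingredients, $\mu_p(\A)\ge(\E Z)^2/\E[Z^2]\ge(1+\alpha)^{-k}\E^*_{\alpha p}(\A)$, as required. The only non-routine step is the choice of the weighting $p^{-|A|}$ in $Z$; with that choice, dual feasibility produces the exact cancellation $p^{-|C|}(\alpha p)^{|C|}=\alpha^{|C|}$, which then collapses binomially to $(1+\alpha)^{|A|}$, and no further obstacle appears.
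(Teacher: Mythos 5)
Your proof is correct and follows essentially the same route as the paper's: pass to the LP dual, form the weighted sum $Z=\sum_{A\in\M}\gamma(A)\,p^{-|A|}\mathbf{1}_{A\subseteq[n]_p}$, apply Paley--Zygmund, and bound the second moment via the dual packing constraints. Your second-moment computation is a bit cleaner than the paper's (you fix $A$ and group $A'$ by $C=A\cap A'\subseteq A$, whereas the paper sums over all $I\subseteq[n]$ and then uses a $\max$-times-$\sum$ split before collecting binomial coefficients), but both arrive at the same bound $\E[Z^2]\le\sum_A\gamma(A)(1+\alpha)^{|A|}\le(1+\alpha)^k\sum_A\gamma(A)$, so the content is identical.
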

As a corollary we deduce a special case of the expectation-threshold conjecture of \cite{KK}.
\begin{corollary}
If $ {\E}^*_p(\mathcal{A}) =1$ then $\mu_{kp}(\mathcal{A}) > 1/e$.
\end{corollary}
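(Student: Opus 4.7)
The corollary follows by a direct application of Theorem \ref{thm:LP}, choosing the free parameter $\alpha$ so that the rescaled probability on the right-hand side lands back at $p$. Specifically, I would apply the inequality
$$\mu_q(\A) \;\ge\; \E^*_{\alpha q}(\A)\,(1+\alpha)^{-k}$$
with $q = kp$ and $\alpha = 1/k$, so that $\alpha q = p$. Substituting the hypothesis $\E^*_p(\A) = 1$ then gives
$$\mu_{kp}(\A) \;\ge\; \E^*_{p}(\A)\,(1+1/k)^{-k} \;=\; (1+1/k)^{-k}.$$

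To conclude, I would invoke the standard fact that the sequence $(1+1/k)^k$ is strictly increasing in $k$ and converges to $e$ from below, so $(1+1/k)^k < e$ for every positive integer $k$, which gives $(1+1/k)^{-k} > 1/e$.

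There is no serious obstacle here, since Theorem \ref{thm:LP} does all the work; the only point worth checking is that the application is legitimate, i.e., that $kp \le 1$ (so that $\mu_{kp}$ is defined in the natural sense), which is the intended range for the corollary (otherwise the inequality $\mu_{kp}(\A) > 1/e$ is trivially meaningful only after interpreting $\mu_{kp}$ for $kp \ge 1$ as $\mu_1(\A) = 1$, in which case there is nothing to prove). The elegance of the argument lies entirely in the choice $\alpha = 1/k$, which balances the rescaling factor $\alpha$ against the exponent $k$ in the loss term $(1+\alpha)^{-k}$ to produce the universal lower bound $1/e$ independent of $k$.
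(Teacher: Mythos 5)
Your proof is correct and is precisely the intended derivation: the paper states this corollary immediately after Theorem \ref{thm:LP} without spelling out the argument, and the natural step is exactly what you do, substituting $p\mapsto kp$ and choosing $\alpha=1/k$ to land on $\E^*_p(\A)(1+1/k)^{-k}=(1+1/k)^{-k}>1/e$. Your side remark about the range $kp\le 1$ is a reasonable point of hygiene but does not affect the substance of the argument.
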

Note that for families with minters of size at most $k$ the expectation threshold and the fractional expectation threshold differ only by a constant factor of $2^k$. Talagrand conjectures in \cite{Talagrand} that the gap between the thresholds for families with minterms of size at most $k$, is of order at most $\log(k)$. It is interesting whether the methods in this paper can be further pushed to yield this result.

In Section \ref{sec:structure} we approach the question of understanding the threshold behavior of a monotone family 
$\A \subseteq P([n])$ via the parameter $\E_p(\A)$, the expected number of minterms of $\A$ in a random set $[n]_p$.  
If we have good control over the second moment of this random variable, the expectation gives us a good indication as to the probability $\mu_p(\A)$. An example of this setting is, say, the family of all subgraphs of $K_m$ that contain a copy of $K_4$, when $p=\Theta(m^{-2/3})$. It is easy to verify that in this case the expected number of minterms (i.e. the expected number of copies of $K_4$ in $G(m,p)$) is $\Theta(1)$, whereas the variance is also of this order of magnitude, which enables us to get an effective lower bound on the measure of the family using the Payley-Zygmond bound 
$$
Pr [Z >0] \ge \frac{E[Z^2]}{(E[Z])^2},
$$
which holds for any non-negative random variable $Z$.
On the other hand, consider the example where the minterms are all subgraphs of $K_m$ containing a copy of "$K_4$ with a tail'', a graph consisting of $K_4$ with a fifth vertex connected to precisely one of the four. We again set 
$p=m^{-2/3}$ and a moment of thought shows that although this family is properly contained in the previous one, the measure of their symmetric difference is negligible, as any copy of $K_4$ that appears in $G(m,m^{-2/3})$ is overwhelmingly likely to have many tails. This is reflected by the fact that the expectation now is huge rather than constant. In this case one has to realize that the tail connected to $K_4$ is a red herring, and proper analysis can, and should, focus on  the previous family.

Such examples are almost canonical in any introductory course to random graphs. Our main theorem in 
Section \ref{sec:structure} guarantees that something similar to one of these two case should hold in any family $\A$ defined by minterms of bounded size $k$. Either there is a substantial subfamily $\B$ for which the first and second moments are well behaved, or a substantial subfamily $\B$ that may be approximated by a different family with minterms of size strictly smaller than $k$.
This structure theorem then allows us to deduce a theorem quite similar to Theorem \ref{thm:decrease}, with a slightly worse rate of decay (e.g. $\mu_{p/2}(\A) \ge \mu_p(\A)/k8^k$, as opposed to the truth which is 
$\mu_{p/2}(\A) \ge \mu_p(\A)/2^k$.)

\section{The Margulis-Russo lemma, and proof of Theorem \ref{thm:decrease}}\label{sec:lowerbound}
For a monotone family $\A$, the Margulis-Russo lemma, (\cite{M}, \cite{R}) relates the derivative of $\mu(\A,p)$ with respect to $p$ with the {\em edge boundary} of $\A$. If $A \in \A$, but $(A \setminus {a})$ is not in $\A$ we say that 
$a$ is a pivotal element of $A$ and that there is a boundary edge  ''leaving $A$ in the direction of $a$". Let $Piv(A)$ denote the number of pivotal elements in $A$ (which is necessarily 0 if $A \not \in \A$).  Let $A$ be a random set chosen according to $\mu(p)$, then $Piv(A)$ is a random variable, and its expectation is a measure of the size of the boundary of $\A$. As the lemma below shows, this is a parameter intimately correlated with the threshold behavior of $\A$.

\begin{lemma}\label{MR}[Margulis,Russo]
$$ p \frac{d \mu(\A,p)}{d p} = \E[Piv(A)].$$
\end{lemma}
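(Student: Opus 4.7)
The plan is to first establish the standard Margulis–Russo influence formula $\frac{d\mu(\A,p)}{dp} = \sum_{a \in [n]} I_a(p)$, where $I_a(p) := \Pr[A \cup \{a\} \in \A \text{ and } A \setminus \{a\} \notin \A]$ for $A = [n]_p$, and then to multiply through by $p$, observing that $p \cdot I_a(p)$ is exactly the probability that $a$ is pivotal and happens to lie in $[n]_p$.

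For the first step, I would condition on coordinate $a$:
$$\mu(\A, p) = p \cdot \Pr[[n]_p \in \A \mid a \in [n]_p] + (1-p) \cdot \Pr[[n]_p \in \A \mid a \notin [n]_p].$$
Because the coordinates of $[n]_p$ are independent, the two conditional probabilities are determined by the distribution on $[n] \setminus \{a\}$ and do not depend on the marginal $p_a$ of coordinate $a$. Treating the $n$ marginals as independent variables and differentiating with respect to $p_a$ gives
$$\frac{\partial \mu}{\partial p_a} = \Pr[[n]_p \in \A \mid a \in [n]_p] - \Pr[[n]_p \in \A \mid a \notin [n]_p] = I_a(p),$$
where the last equality uses monotonicity of $\A$: the difference counts precisely those configurations of the other coordinates for which flipping $a$ changes membership, i.e., the pivotal event. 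Setting all $p_a = p$ and applying the chain rule yields the Margulis–Russo identity $\frac{d\mu(\A,p)}{dp} = \sum_a I_a(p)$.

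For the second step, I would rewrite $\E[Piv(A)]$ by linearity of expectation as
$$\E[Piv(A)] = \sum_{a \in [n]} \Pr[a \in A \text{ and } a \text{ is pivotal for } A].$$
Conditionally on $a \in A$, independence across coordinates makes the event ``$a$ is pivotal for $A$'' depend only on $A \setminus \{a\}$, and its probability is exactly $I_a(p)$. Hence each summand equals $p \cdot I_a(p)$, and summing over $a$ gives $\E[Piv(A)] = p \sum_a I_a(p) = p \cdot \frac{d\mu(\A,p)}{dp}$, which is the claimed identity.

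The argument is essentially routine and I do not anticipate any real obstacle. The only point requiring slight care is the factor of $p$, which arises because $Piv(A)$ counts only those pivotal elements actually present in the random set, whereas $I_a(p)$ is the probability that $a$ is pivotal regardless of whether $a \in A$; the two differ precisely by the marginal probability $p$ that $a$ is selected.
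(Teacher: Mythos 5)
Your proof is correct and follows essentially the same route as the paper's: introduce a multivariate product measure with independent marginals $p_1,\dots,p_n$, compute $\partial\mu/\partial p_a$ by conditioning on coordinate $a$ (your $I_a(p)$ is exactly the paper's $b_i$, and your decomposition $\mu = p\,\Pr[\cdot\mid a\in]+(1-p)\,\Pr[\cdot\mid a\notin]$ is equivalent to the paper's $\mu = a_i + p_i b_i$), then specialize $p_a\equiv p$ and apply the chain rule before matching the sum $p\sum_a I_a(p)$ with $\E[Piv(A)]$ by linearity. There is no substantive difference in strategy, only in presentation.
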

This lemma, which is so simple to state, and, as we shall see shortly, very easy to prove, is extremely useful.
See, e.g., \cite{FK},\cite{F99},\cite{BR} . It is not surprising that this lemma is relevant when studying thresholds, as the expression on the right hand side is clearly related to the ratio between the width of the threshold interval and the value of $p$ within the interval (it is an approximation of its reciprocal). Both Russo and Margulis proved this lemma by induction on $n$, the size of the ground set from which $A$ is chosen. For the sake of being self contained we present a different proof, which is well known folklore, perhaps due to Gil Kalai.

{\bf Proof of Lemma \ref{MR}:} 
Let $\A$ be a monotone family of subsets of $[n]$, and for some fixed $(p_1,p_2,\ldots,p_n)$ consider the following product measure $\mu_{(p_1,p_2,\ldots,p_n)}$ on 
$P([n])$. The measure of a set $A$ is $\prod_{i \in A} p_i \prod_{j \not \in A} (1-p_j) $. For $i \in [n]$ and a random set $A$ chosen according to $\mu_{(p_1,p_2,\ldots,p_n)}$ let $a_i$ denote the probability that ($A \in \A$ and $(A \setminus \{i\}) \in \A$), and let $b_i$ denote the probability that $i$ is pivotal in $A \cup \{i\}$, i.e.  ($(A \cup \{i\}) \in \A$ and $(A \setminus \{i\}) \not \in \A$). This means that the probability that $i$ is pivotal is $p_i b_i$. Recalling that $\A$ is monotone, for all $i$ we have $\mu_{(p_1,p_2,\ldots,p_n)}(A) = a_i + p_ib_i$, and $\E[Piv(A)] = \sum_i p_ib_i.$

We now let all of the $p_i$ depend on a common parameter $p$ in the following trivial way: $p_i(p)=p$, and note that the resulting measure is $\mu_p$. The Margulis-Russo formula now follows from a simple application of the chain rule.
$$
\frac{d \mu(\A,p)}{d p} = \sum_i \frac{\partial \mu_{(p_1,p_2,\ldots,p_n)}(\A)}{\partial p_i} \cdot \frac {d p_i}{dp}= \sum b_i =\frac{ \E[Piv(A)]} {p} .
$$
\qed

The fact that Theorem \ref{thm:decrease} follows immediately from the Margulis-Russo formula, as pointed out to us by Oliver Riordan, is yet another example of how useful this result is.

{\bf Proof of Theorem \ref{thm:decrease}:} Let $\A$ be a monotone family with all minterms of size at most $k$. Let $A$ be a random set chosen according to $\mu_p$. Note that $A$ can never have more than $k$ pivotal elements, and that if $A \not \in \A$ then, by definition there are no pivotal elements. Therefor 
$$
  \E[Piv(A)] \leq k\cdot\mu_p(A). 
$$ 
Using this in conjunction with the Margulis-Russo formula, and deriving with respect to $p$ gives
$$
\left(\frac{\mu_p(\A)}{p^k}\right)' = \frac{\mu'p^k - kp^{k-1}\mu}{p^{2k}} \leq 0.
$$
\qed

\section{A structure theorem for monotone families with small minterms.}\label{sec:structure}
We begin with some notation.
Let $\A$ be a monotone family and let $\M(A)$ be the set of its minterms. Throughout this section we will assume that all minterms are of size at most $k$. Let $X^{\A}$ be the random variable that counts the number of minterms of $\A$ in $[n]_p$. For a set $V\subset [n]$ we will define the family of its  $m-$supplements with respect to $\A$ to be the family of sets $W$ of size $m$ whose disjoint
union  with $V$ form a minterm, namely
$N^{m}_{\A}(V)=\{W \subset [n] \mbox{ s.t. } |W|=m \mbox{ and } \exists M \in \M(\A) \mbox{ s.t. } W\uplus  V= M\}$.

We say that $\A$ is \textit{tame} with respect to $p$ if for any $1\leq m\leq k-1$ and for any $V\subset[n]$ one has $|N^{m}_{\B}(V)|<p^{-m} $. We say that $\B$ is a \textit{tame $m$-approximation} of $\A$ at $p$, if there is a subfamily 
$\A'\subset \A$, such that for all minterms $B\in \M(B)$ it holds that the set $N_{\A'}^{m}(B)$ has size at least $p^{-m}$ and is tame.

The definition of a tame family is useful because it implies that for such a family the first moment bound on the measure is not too far from the truth. This is captured by the following lemma.
\begin{lemma} \label{lemma:tameFamily}
Let $\A$ be a tame family with respect to $p$ with minterms of size at most $k$. Then 
$$
\mu_{p}(\A)\ge \frac{\min \{\E_p[X^{\A}],1\}}{k2^k}
$$
\end{lemma}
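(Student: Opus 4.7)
My plan is to apply the Paley-Zygmund (second moment) inequality to the random variable $X^{\A}$. Since $\mu_p(\A) = \Pr[X^{\A} > 0] \geq (\E_p[X^{\A}])^2 / \E_p[(X^{\A})^2]$, the lemma will follow, after a short case split on whether $\E_p[X^{\A}] \geq 1$, once I establish a second-moment bound of the shape
$$
\E_p[(X^{\A})^2] \leq (\E_p[X^{\A}])^2 + k \cdot 2^k \cdot \E_p[X^{\A}].
$$

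To prove this bound I would expand $\E_p[(X^{\A})^2] = \sum_{M_1, M_2 \in \M(\A)} p^{|M_1 \cup M_2|}$ and, for each ordered pair, decompose $M_2$ as $V \uplus W$ with $V = M_1 \cap M_2 \subseteq M_1$ and $W = M_2 \setminus M_1$ disjoint from $M_1$. After factoring $p^{|M_1|}$ out, the inner sum becomes $\sum_{V \subseteq M_1} \sum_{W:\, V \uplus W \in \M,\, W \cap M_1 = \emptyset} p^{|W|}$, which I split according to whether $V = \emptyset$ or not. The $V = \emptyset$ part is bounded by $\sum_{W \in \M} p^{|W|} = \E_p[X^{\A}]$, and summing over $M_1$ then contributes at most $(\E_p[X^{\A}])^2$ to the second moment.

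For each nonempty $V \subseteq M_1$ the size $m = |W|$ lies in $\{0, 1, \ldots, k - |V|\}$; since $|V| \geq 1$, every $m \geq 1$ that appears satisfies $m \leq k - 1$, so tameness yields $p^m |N^m_{\A}(V)| < 1$ for each such $m$, while the $m = 0$ term contributes $1$ only when $V$ itself is a minterm, which (by minimality of minterms) forces $V = M_1$. Thus each nonempty $V$ contributes at most $k$; summing over the at most $2^k$ nonempty subsets of $M_1$ gives a per-$M_1$ overhead of $k \cdot 2^k$, and weighting by $p^{|M_1|}$ and summing over $M_1 \in \M(\A)$ produces the second term $k \cdot 2^k \cdot \E_p[X^{\A}]$.

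The step I expect to be most delicate is the $V = \emptyset$ case, where the tameness hypothesis is silent because $m = |W|$ may equal $k$. The point is that this contribution naturally factorizes as the ``independent'' product $(\E_p[X^{\A}])^2$, which the Paley-Zygmund framework absorbs at no cost; tameness is only required for correlated pairs, i.e.\ those whose minterms overlap in a nonempty $V$ of size at most $k-1$, which is exactly the regime in which the bound $|N^m_{\A}(V)| < p^{-m}$ is available.
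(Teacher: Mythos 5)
Your proof is correct and takes essentially the same route as the paper: Paley--Zygmund applied to the minterm count $X^{\A}$, with tameness controlling the contribution of overlapping pairs. Grouping ordered pairs $(M_1,M_2)$ by $V=M_1\cap M_2$ and $W=M_2\setminus M_1$ is just a reorganization of the paper's split into diagonal, disjoint, and overlapping sums, and the way tameness enters is identical. One numerical caution: the displayed target $\E_p[(X^{\A})^2]\le(\E_p[X^{\A}])^2+k2^k\,\E_p[X^{\A}]$ is a hair too loose --- after Paley--Zygmund and the $\E\ge 1$ versus $\E<1$ case split it yields a denominator $k2^k+1$, not $k2^k$. However, the estimates you actually describe are tighter than this display: the case $V=M_1$ contributes exactly $1$ (and nothing for $m\ge 1$, since no minterm properly contains another), while each of the at most $2^k-2$ proper nonempty $V\subsetneq M_1$ contributes at most $k-1$; hence the per-$M_1$ overhead is at most $1+(k-1)(2^k-2)\le k2^k-1$, and plugging this into Paley--Zygmund does give $\mu_p(\A)\ge \min\{\E_p[X^{\A}],1\}/(k2^k)$ as required.
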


\begin{proof}

We would like to use the Paley-Zygmund inequality to bound the probability from below:
\begin{equation} \label{eq:PZ}
\mu_p(\A)\geq \frac{\E^2[X^{\A}]}{\E[(X^{\A})^2]}
\end{equation}

Let us calculate the numerator and denominator separately.
Denoting $\M(m)= |\{M_i\in \M \mbox{ s.t. } |M_i|=m\}|$ it's easy to see that:
$$\E[X^\A]^2=\sum_{m,\ell=1}^k p^{m+l} \M(m)\M(l)$$

The denominator needs a bit more careful work. Remembering that $\M(\A)=\{M_i\}$ and that $X^\A$ is the random variable counting the number of minterms, we can define $X_i$ to be the indicator of $M_i$ and write $X^\A = \sum_i X_i$. With this we have:
\begin{align*}
\E[X^2]&=\E[\sum_{i,j}X_i X_j]
=\E[\sum_i X_i^2]+\E[\sum_{ \substack{ i\ne j \\ M_i \cap M_j \ne \varnothing }} X_i X_j] +\E[\sum_{M_i\cap M_j=\varnothing} X_iX_j]\\
\end{align*}
it's easy to see that:
\begin{align*}
\E[\sum_{M_i\cap M_j=\varnothing} X_iX_j]&\leq \sum_{m,l=1}^{k} p^{m+l}\M(m)\M(l)=\E[X^\A]^2
\end{align*}
and as the $X_i$s are indicators we get
$$
\E[\sum_i X_i^2]=\E[\sum_i X_i]=\E[X^\A]
$$
and so we are left with taking care of the second summand. 

Note that $\A$ is a tame family, thus for any $M_i$ and $1\leq m \leq k-1$ one has:
\begin{align*}
|\{ M_j \mbox{ s.t. } M_i\cap M_j\ne \varnothing, |M_j \cap M_i|=l \}|&=\sum_{V \subset M_i} N_\A^{l}(V)\leq 2^k p^{-l}
\end{align*}
With this we are ready to do the calculation. We break the sum into sums corresponding to the different sizes of minterms and supplements. 
\begin{align*}
\E[\sum_{ \substack{ i\ne j \\ M_i \cap M_j \ne \varnothing }} X_i X_j]&=\sum_{m=1}^{k}\sum_{i: |M_i|=m} (\sum_{j=1}^{k-1} \sum_{\substack{j:M_j\cap M_i\ne \varnothing \\ |M_j\setminus M_i|=l}}\E[ X_iX_j] )  \\
&=\sum_{m=1}^{k}\sum_{i: |M_i|=m} (\sum_{j=1}^{k-1} |\{ M_j \mbox{ s.t. } M_i\cap M_j \ne \varnothing, |M_j \setminus M_i|=l \}|  p^{m+l} ) \\
&\le \sum_{m=1}^{k}\sum_{i: |M_i|=m} (\sum_{j=1}^{k-1} 2^k  p^{m} )\\
&=(k-1)2^k\sum_{m=1}^{k}\sum_{i: |M_i|=m}p^m\\
&=(k-1)2^k\sum_{m=1}^{k}\M(m)p^m\\
&=(k-1)2^k\E[X^\A]
\end{align*}
Summing everything together we get that:
$$
\E[(X^\A)^2]\leq ((k-1)2^k+1)\E[X^\A]+\E[X^\A]^2
$$
And now plugging this in to Paley-Zygmund we get:
$$
\mu_{p}(\A)\ge \frac{\E[X^\A]^2}{\E[(X^\A)^2]}\geq \frac{\E[X^\A]^2}{((k-1)2^k+1)\E[X^\A]+\E[X^\A]^2}\geq \frac{\min\{\E[X^\A],1\}}{(k-1)2^k+2}
$$
For the simplicity of further calculations we can obviously write:
$$
\mu_{p}(\A)\ge \frac{\min\{\E[X^\A],1\}}{k2^k}
$$
as needed.
\end{proof}

\begin{corollary} \label{cor:TameApp}
Let $\A$ be a monotone family with minterms of size at most $k$. Then 
\begin{enumerate}
\item
If $\A$ is tame with respect to $p/2$ then $\mu_{p/2}(\A) \ge \mu_p(\A) / (k 2^{2k}).$
\item 
If $\B$ is a tame $m$-approximation of $\A$ at $p$ then $\mu_p(\A) \ge \mu_p(\B) / (m2^m)$

\end{enumerate}

\end{corollary}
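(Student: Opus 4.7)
The plan for both parts is to apply Lemma~\ref{lemma:tameFamily} to an appropriate tame family and then convert via a first- or second-moment comparison.

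\textbf{Part 1.} I apply Lemma~\ref{lemma:tameFamily} to $\A$ at $p/2$:
$$\mu_{p/2}(\A) \geq \frac{\min\{\E_{p/2}[X^{\A}], 1\}}{k\,2^k}.$$
Since every minterm $M$ of $\A$ has $|M|\le k$, halving $p$ shrinks each $p^{|M|}$ by at most $2^k$, so $\E_{p/2}[X^{\A}] \geq 2^{-k}\E_p[X^{\A}] \geq 2^{-k}\mu_p(\A)$, the last inequality being the trivial first-moment bound. Splitting by whether the minimum is $1$ (and using $\mu_p(\A)\leq 1$) or $\E_{p/2}[X^{\A}]$, both cases give $\mu_{p/2}(\A) \geq \mu_p(\A)/(k\,2^{2k})$.

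\textbf{Part 2.} Since $\A' \subseteq \A$, it suffices to show $\mu_p(\A') \geq \mu_p(\B)/(m\,2^m)$. For each $B \in \M(\B)$, let $\mathcal{F}_B$ be the monotone family on $[n] \setminus B$ whose minterms are the supplements $N^m_{\A'}(B)$ (all of size $m$). By hypothesis $|N^m_{\A'}(B)| \geq p^{-m}$, so $\E_p[X^{\mathcal{F}_B}] = |N^m_{\A'}(B)|\,p^m \geq 1$, and $\mathcal{F}_B$ is tame; Lemma~\ref{lemma:tameFamily} therefore gives $\mu_p(\mathcal{F}_B) \geq 1/(m\,2^m)$. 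By independence of the coordinates inside and outside $B$,
$$P\bigl(B \subseteq [n]_p \text{ and } \exists W \in N^m_{\A'}(B):\, W \subseteq [n]_p\bigr) \geq \frac{p^{|B|}}{m\,2^m}.$$

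\textbf{Combining.} To pass from these per-$B$ bounds to the target estimate on $\mu_p(\A')$, I set $Y = \sum_{B\in\M(\B)} Y_B$ with $Y_B$ the indicator of the event above; then $\{Y\geq 1\} = \{[n]_p \in \A'\}$ and $\E[Y] \geq \E_p[X^{\B}]/(m\,2^m)$. A second-moment analysis of $\E[Y^2]$ --- bounding diagonal contributions via the tameness of each $\mathcal{F}_B$ (as in the proof of Lemma~\ref{lemma:tameFamily}) and off-diagonal ones trivially by $p^{|B\cup B'|}$ --- together with the Paley-Zygmund bound $\mu_p(\B) \geq \E_p[X^{\B}]^2/\E_p[(X^{\B})^2]$ should yield the claimed $\mu_p(\A') \geq \mu_p(\B)/(m\,2^m)$ after cancellation of moment ratios. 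The main obstacle is avoiding an additional loss of a factor $m\,2^m$: a naive Paley-Zygmund bound on $Y$ incurs the square $(m\,2^m)^2$, so obtaining the tight constant requires the tameness of the $\mathcal{F}_B$'s to be leveraged exactly once rather than twice when matching against the $\B$-moment bound.
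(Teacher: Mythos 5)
Your argument for part (1) is exactly the paper's: apply Lemma~\ref{lemma:tameFamily} at $p/2$, use $\E_{p/2}[X^{\A}]\ge 2^{-k}\E_p[X^{\A}]$, and close with the first-moment bound $\E_p[X^{\A}]\ge\mu_p(\A)$. This is correct.

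\textbf{Part 2.} Here you diverge from the paper, and the divergence leaves a genuine gap that you yourself identify but do not close. You set $Y=\sum_{B\in\M(\B)}Y_B$ and hope to estimate $P(Y\ge1)$ by Paley--Zygmund. The computation you sketch gives $\E[Y]\ge \E_p[X^{\B}]/(m2^m)$, and the cheapest bound on $\E[Y^2]$ is $\E[Y^2]\le\E_p[(X^{\B})^2]$ (since $Y_B\le {\bf 1}[B\subseteq[n]_p]$). Feeding these into Paley--Zygmund gives
$$\mu_p(\A')\ \ge\ \frac{\E[Y]^2}{\E[Y^2]}\ \ge\ \frac{1}{(m2^m)^2}\cdot\frac{\E_p[X^{\B}]^2}{\E_p[(X^{\B})^2]},$$
and the fraction on the right is a \emph{lower} bound for $\mu_p(\B)$ via Paley--Zygmund applied to $X^{\B}$, not an upper bound; so you cannot substitute $\mu_p(\B)$ for it and cancel. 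As you note, this is exactly where an extra factor of $m2^m$ creeps in, and the sentence ``the tameness of the $\mathcal{F}_B$'s [must] be leveraged exactly once rather than twice'' describes the problem without resolving it. That is the missing step.

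The paper's route for part (2) is different: it writes down the composition inequality $\mu_p(\A)\ge\mu_p(\B)\min_{B\in\M(\B)}\mu_p(N^m_{\A}(B))$ directly (using that the coordinates of a supplement $W$ are disjoint from those of $B$), and then applies Lemma~\ref{lemma:tameFamily} once, to a single supplement family $N^m_{\A'}(B)$ of minterms of size exactly $m$, to get $\mu_p(N^m_{\A'}(B))\ge 1/(m2^m)$. This uses tameness only once and avoids a global second-moment calculation over $\M(\B)$ altogether, which is precisely how the squared loss you worried about is sidestepped. Your per-$B$ step, $P(B\subseteq[n]_p \text{ and } \exists W\in N^m_{\A'}(B):W\subseteq[n]_p)\ge p^{|B|}/(m2^m)$, is the same local ingredient the paper uses; what you are missing is the paper's way of aggregating these local bounds into a statement about $\mu_p(\B)$ without paying for a second moment on $Y$. (Also, $\{Y\ge1\}=\{[n]_p\in\A'\}$ is not an equality --- $\A'$ may have minterms not of the form $B\cup W$ with $B\in\M(\B)$ --- but only the inclusion $\{Y\ge1\}\subseteq\{[n]_p\in\A'\}$ is needed, so this is harmless.)

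In short: part (1) matches the paper; part (2) follows a second-moment route that you correctly flag as not delivering the stated constant, whereas the paper aggregates via a single multiplicative decomposition and applies the tameness lemma only to the supplement families.
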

Note that item (1) is weaker than what can be deduced using Theorem \ref{thm:decrease}, nonetheless we include it with its proof in order to demonstrate the information one can deduce from the structural approach.
\begin{proof}
To see (1) we only need to note that as the minterms of $\A$  are of size at most $k$ one has $\E_{p/2}[X^{\A}]\geq \frac{1}{2^k} \E_{p}[X^{\A}]$, together with the inequality on the first moment  $\E_{p}[X^{\A}]\geq \mu_{p}(\A)$ and Lemma \ref{lemma:tameFamily} we get
$$
\mu_{p/2}(\A)\ge \frac{\min\{\E_{p/2}[X^\A],1\}}{k2^k}\geq \frac{\min\{\E_{p}[X^\A],1\}}{k2^{2k}} \geq \frac{\mu_{p}(\A)}{k2^{2k}}
$$
as needed.

For (2) note that as any $B\in \M( \B)$ is a subset of a minterm of $\A$ the following inequality holds for any $p$:

$$
\mu_p(\A)\geq \mu_p(\B) \min_{B\in \B} \mu_p(N_{\A}^{m}(B))
$$
It is left to show that for any $B\in \B$ one has $ \mu_p(N_{\A}^{m}(B)) \geq \frac{1}{m2^m}$. $\B$ is an $m-$approximation so there is a family $\A'\subset \A$ for which $|N_{\A'}^{m}(B)|>p^{-m}$, furthermore each minterm in $N_{\A'}^{m}(B)$ is of size $m$, so $\E[X^{N_{\A'}^{m}(B)}]\geq 1$. As $N_{\A'}^{m}(B)$ is tame we can apply Lemma \ref{lemma:tameFamily} and get:
$$
\mu_p(N_{\A'}^{m}(B)) \geq \frac{\min\{\E_{p/2}[X^{N_{\A'}^{m}(B)}],1\}}{m2^m}\geq \frac{1}{m2^m}
$$
Finally as $\A'\subset \A$ we get $\mu_p(N_{\A}^{m}(B)) \geq \mu_p(N_{\A'}^{m}(B)) \geq \frac{1}{m2^m}$ as needed.
\end{proof}

Now we are ready to present the structural result and its corollaries.

\begin{theorem} \label{thm:StrTheorem}
 If $\A$ is monotone with minterms of size at most $k$, and let $p \in [0,1]$. Then at least one of the following two possibilities holds.
 \begin{enumerate}
 \item
 There exists a subfamily $\B \subseteq \A$,  with $\mu_{p}(\B) \ge \mu_{p}(\A)/2$, which is tame with respect to $p/2$.
 \item
  There exists $m$ between 1 and $k-1$, and a family $\B$ which is a tame $m$-approximation of $\A$ at $p/2$ with $\mu_{p}(\B) \ge \mu_{p}(\A)/2^{m+1}.$
 \end{enumerate}
\end{theorem}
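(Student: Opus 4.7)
The plan is to produce the desired $\B$ by an iterative ``peeling'' algorithm that removes heavy sets at the smallest available level. Set $\A^{(0)}:=\A$. At step $i\geq 0$, test whether $\A^{(i)}$ is tame with respect to $p/2$. If it is, halt and return $\B:=\A^{(i)}$ for case~(1). Otherwise let $m_i\in\{1,\dots,k-1\}$ be the smallest level at which some $V$ is \emph{heavy} in $\A^{(i)}$, meaning $|N^{m_i}_{\A^{(i)}}(V)|\geq (p/2)^{-m_i}$; let $\mathcal{V}_i$ be the collection of all $V$ heavy at level $m_i$ in $\A^{(i)}$; and let $\B_i$ be the monotone family whose minterms are the minimal elements of $\mathcal{V}_i$. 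If $\mu_p(\B_i)\geq \mu_p(\A)/2^{m_i+1}$, halt and return $\B:=\B_i$ with $m:=m_i$ for case~(2). Otherwise, obtain $\A^{(i+1)}$ by deleting from $\M(\A^{(i)})$ every minterm that contains some $V\in\mathcal{V}_i$, and iterate.

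Three claims drive the analysis. First, whenever the algorithm exits to case~(2), the family $\B_i$ is a tame $m_i$-approximation of $\A$ at $p/2$ with witness $\A':=\A^{(i)}\subseteq\A$: every minterm $B$ of $\B_i$ satisfies $|N^{m_i}_{\A^{(i)}}(B)|\geq (p/2)^{-m_i}$ by definition of $\mathcal{V}_i$, and for the tameness of $N^{m_i}_{\A^{(i)}}(B)$ (whose minterms all have size $m_i$), unwinding the definitions shows that its supplement of $V'$ at level $m'<m_i$ is exactly $N^{m'}_{\A^{(i)}}(B\cup V')$, which has size strictly less than $(p/2)^{-m'}$ by the minimality of $m_i$. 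Second, the $m_i$'s are strictly increasing and so the loop terminates after at most $k-1$ iterations: the inclusion $\A^{(i+1)}\subseteq\A^{(i)}$ forbids new heaviness at any level below $m_i$, while a would-be heavy $V$ at level $m_i$ in $\A^{(i+1)}$ would lie in $\mathcal{V}_i$ and force a minterm of $\A^{(i+1)}$ to contain such a $V$, contradicting the construction. Third, if the algorithm exits to case~(1) at step $t$, then $\mu_p(\A^{(t)})>\mu_p(\A)\bigl(1-\sum_{i<t}2^{-(m_i+1)}\bigr)\geq\mu_p(\A)\bigl(1-(1/2-1/2^k)\bigr)>\mu_p(\A)/2$, because the $m_i$'s are distinct members of $\{1,\dots,k-1\}$ and $\sum_{m=1}^{k-1}2^{-(m+1)}=1/2-1/2^k$.

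I expect the main obstacle to be the tameness verification in the case~(2) exit: this is precisely where it matters that we peel at the \emph{smallest} heavy level, because otherwise lower-level heaviness in an enlarged witness $B\cup V'$ would break the tameness of $N^{m_i}_{\A^{(i)}}(B)$, and only the minimality of $m_i$ rules this out. Everything else reduces to bookkeeping of measure losses, which fits together cleanly thanks to the identity $\sum_{m=1}^{k-1}2^{-(m+1)}=1/2-1/2^k<1/2$ leaving a positive margin for the case~(1) bound.
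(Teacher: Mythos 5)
Your proof is correct and follows essentially the same approach as the paper's: iteratively peel off, level by level, the sets with too many supplements, and then argue that either the residual family is tame and retains at least half the measure, or the mass lost at some level is large enough to exhibit a tame $m$-approximation. Your variant (peeling always at the smallest heavy level and exiting as soon as $\B_i$ is large) is a cosmetic reorganization of the paper's fixed-order peeling $m=1,\dots,k-1$ with a post-hoc search for the level where the measure dropped the most; the tameness verification via minimality of $m_i$, the bookkeeping $\A^{(i)}\setminus\A^{(i+1)}\subseteq\B_i$, and the geometric-series margin $\sum_{m\geq1}2^{-(m+1)}<1/2$ are all the same ingredients.
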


By induction on the size of the minterms and application of the Theorem \ref{thm:StrTheorem} and corollary 
\ref{cor:TameApp} we deduce the following corollary, whose proof we defer until after the proof of the theorem.

\begin{corollary} \label{cor:p/2top}
Let $\A$ be a monotone family of subsets of $[n]$ with all minterms of size at most $k$ then
$$
\mu_{p/2}(\A)\ge \frac{1}{k2^{3k-1}} \mu_{p}(\A),
$$
\end{corollary}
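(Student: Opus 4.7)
The plan is to induct on $k$, the upper bound on the minterm size. The base case $k = 1$ is immediate from Theorem~\ref{thm:decrease}: it asserts that $\mu_p(\A)/p$ is monotone decreasing, so $\mu_{p/2}(\A) \ge \mu_p(\A)/2$, comfortably better than the target bound $\mu_p(\A)/4$. For the inductive step I assume the claim for all families whose minterms have size at most $k-1$ and apply Theorem~\ref{thm:StrTheorem} to $\A$ at probability $p$.

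In case (1) I receive a subfamily $\B \subseteq \A$ that is tame with respect to $p/2$ and satisfies $\mu_p(\B) \ge \mu_p(\A)/2$. Applying Corollary~\ref{cor:TameApp}(1) to $\B$ together with monotonicity $\mu_{p/2}(\A) \ge \mu_{p/2}(\B)$ gives
$$
\mu_{p/2}(\A) \ge \frac{\mu_p(\B)}{k\, 2^{2k}} \ge \frac{\mu_p(\A)}{k\, 2^{2k+1}},
$$
which fits under $\mu_p(\A)/(k\, 2^{3k-1})$ whenever $k \ge 2$, which is precisely the inductive regime. In case (2) I receive $m \in \{1,\ldots,k-1\}$ and a tame $m$-approximation $\B$ of $\A$ at $p/2$ with $\mu_p(\B) \ge \mu_p(\A)/2^{m+1}$. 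The crucial observation is that every minterm of $\B$ has size at most $k - m < k$, since combining it with an $m$-element supplement must yield a minterm of size at most $k$ of a subfamily of $\A$. Thus the induction hypothesis applies to $\B$, giving $\mu_{p/2}(\B) \ge \mu_p(\B)/((k-m)\, 2^{3(k-m)-1})$. Chaining this with Corollary~\ref{cor:TameApp}(2) at $p/2$, which yields $\mu_{p/2}(\A) \ge \mu_{p/2}(\B)/(m\, 2^m)$, and with $\mu_p(\B) \ge \mu_p(\A)/2^{m+1}$, collapses the three factors into
$$
\mu_{p/2}(\A) \ge \frac{\mu_p(\A)}{m(k-m)\, 2^{3k - m}}.
$$

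The only real obstacle is the arithmetic bookkeeping at the end: the three accumulated losses must lie under the single envelope $k \cdot 2^{3k-1}$ for every admissible $m$. This reduces to the inequality $m(k-m) \le k \cdot 2^{m-1}$ for $1 \le m \le k-1$, which follows immediately from $m \le 2^{m-1}$ (valid for every $m \ge 1$) together with $k - m \le k$. Combined with the case-(1) verification (valid for $k \ge 2$) and the $k = 1$ base case coming from Theorem~\ref{thm:decrease}, the induction closes cleanly.
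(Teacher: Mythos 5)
Your proof is correct and follows essentially the same inductive scheme as the paper's: apply Theorem~\ref{thm:StrTheorem}, handle case~(1) via Corollary~\ref{cor:TameApp}(1), and in case~(2) observe that the minterms of $\B$ have size at most $k-m$, so the induction hypothesis combines with Corollary~\ref{cor:TameApp}(2) and the density loss $2^{m+1}$ to give the factor $m(k-m)2^{3k-m}$, after which the elementary bound $m(k-m)\le k\,2^{m-1}$ closes the argument. The only cosmetic deviation is the base case $k=1$, which you dispatch via Theorem~\ref{thm:decrease} whereas the paper invokes Lemma~\ref{lemma:tameFamily} directly; both give $\mu_{p/2}(\A)\ge\mu_p(\A)/4$, and incidentally your case-(1) computation (with the factor $k\,2^{2k}$) is the one that actually matches Corollary~\ref{cor:TameApp}(1) as stated, whereas the paper's own write-up uses $k\,2^{k}$ there, a harmless slip since the slack is more than enough.
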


By repeated application of the corollary above one gets:

\begin{corollary} Let $\A$ be a monotone family of subsets of $[n]$ with minterms of size at most $k$. Then:
$$\delta_\epsilon(\A)\ge 1-(2\epsilon)^\frac{1}{3k+\log k -1}$$
\end{corollary}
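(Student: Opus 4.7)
The plan is to derive the bound on $\delta_\epsilon(\A)$ by straightforward iteration of Corollary \ref{cor:p/2top}. Writing $C = k\cdot 2^{3k-1}$, applying the corollary $j$ times yields
$$
\mu_{p/2^j}(\A) \;\ge\; \mu_p(\A)/C^j
$$
for every positive integer $j$ (the corollary itself is invariant under rescaling $p$ since it holds for all $p \in [0,1]$).

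Next I specialize to $p = p_{1/2}$, obtaining $\mu_{p_{1/2}/2^j}(\A) \ge 1/(2C^j)$. I then choose $j$ to be the largest integer for which $2C^j \le 1/\epsilon$, so that the right-hand side is at least $\epsilon$. Since $\mu(\A,\cdot)$ is monotone in $p$, this forces $p_\epsilon \le p_{1/2}/2^j$, i.e.
$$
1 - \delta_\epsilon(\A) \;=\; \frac{p_\epsilon}{p_{1/2}} \;\le\; 2^{-j}.
$$

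To convert this into the stated exponent, I use $j \ge \log_C(1/(2\epsilon)) - 1$, which gives $2^{-j} \le 2 \cdot (2\epsilon)^{1/\log_2 C}$. Computing $\log_2 C = 3k + \log_2 k - 1$ then yields exactly the desired exponent $1/(3k + \log k - 1)$, up to the leading constant.

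The only (mild) obstacle is handling the spurious factor $2$ introduced by rounding $j$ down: it can be absorbed by noting the claim is trivial unless $\epsilon$ is small enough that $2 \le (2\epsilon)^{-\alpha}$ for a tiny $\alpha$, or by a slightly sharper choice of $j$ that trades a small amount of slack in the exponent against the multiplicative loss. No structural input beyond Corollary \ref{cor:p/2top} is needed; the entire argument is a one-line iteration once the previous corollary is in hand.
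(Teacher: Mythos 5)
Your strategy is the right one --- iterate Corollary \ref{cor:p/2top} --- and is evidently what the paper means by ``repeated application''. But the factor of $2$ you flag at the end is a genuine gap, and neither of your proposed repairs closes it. With $C = k2^{3k-1}$ and $j = \lfloor\log_C(1/(2\epsilon))\rfloor$, your bound $p_\epsilon/p_{1/2} \le 2^{-j}$ can exceed the target $(2\epsilon)^{1/\log_2 C}$ by up to a factor $2$ whenever $\log_C(1/(2\epsilon))$ is not an integer. The statement is not trivial at moderate $\epsilon$, and the iteration only produces dyadic scalings of $p$, so there is no ``sharper choice'' of an integer $j$ that avoids the rounding loss.

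The missing ingredient is an interpolation to non-dyadic scalings. Because every minterm has size at most $k$, one has $\mu_{\lambda p}(\A) \ge \lambda^k \mu_p(\A)$ for all $\lambda \in (0,1]$ (this is the integrated form of Theorem \ref{thm:decrease}; alternatively, realize $[n]_{\lambda p}$ as a $\lambda$-thinning of $[n]_p$ and note a fixed minterm of size $\le k$ survives with probability $\ge \lambda^k$). For real $t\ge 0$ write $t=j+s$ with $j=\lfloor t\rfloor$, $s\in[0,1)$; then
$$
\mu_{2^{-t}p}(\A)\;\ge\;2^{-sk}\,\mu_{2^{-j}p}(\A)\;\ge\;2^{-sk}\,C^{-j}\,\mu_p(\A)\;\ge\;C^{-s}\,C^{-j}\,\mu_p(\A)\;=\;C^{-t}\mu_p(\A),
$$
the last inequality using $C=k2^{3k-1}\ge 2^k$, hence $2^{-sk}\ge 2^{-s\log_2 C}=C^{-s}$. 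Now take $p=p_{1/2}$ and $t=\log_2(p_{1/2}/p_\epsilon)$: the display gives $\epsilon\ge C^{-t}/2$, so $t\ge\log_C(1/(2\epsilon))$ and $p_\epsilon/p_{1/2}=2^{-t}\le(2\epsilon)^{1/\log_2 C}=(2\epsilon)^{1/(3k+\log k-1)}$, as claimed. This is exactly your iteration, with the rounding loss eliminated by the interpolation step rather than absorbed into a constant you cannot afford.
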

Note, again, that this is weaker than what follows from Theorem \ref{thm:decrease}.

\begin{proof}[Proof of Theorem \ref{thm:StrTheorem}]
Let us  iteratively define new families, one of which will be either a tame family or a tame approximation. Let $\A_1 := \A$, for each  $1\leq m \leq k-1$:
\begin{align*}
\B_{m}&=\{V\subset[n] \mbox{ s.t. } |N^{m}_{\A_{m}}(V)|\geq (p/2)^{-m}\}\\
\M(\A_{m+1})&=  \M(\A_{m}) \setminus \{M \mid \exists V\in\B_{m}, V\subset M\}
\end{align*}
and let $\A_{m+1}$ be the family spanned by $\M(\A_{m+1})$.

As $\A=\A_1\supseteq \A_2 \dots \supseteq \A_{k-1} \supseteq \A_{k}$ there are two possible options. Either there is some $m$ for which $\mu_p(\A_m \setminus \A_{m+1})$ is large, or if for all $m$ we have that $\mu_p(\A_m \setminus \A_{m+1})$ is small then $\mu_p(\A_k)$ is large. 

Note that $\A_k$ is tame with respect to $p/2$ as we removed all subsets $V\subset [n]$ that have many supplements (of any size.) If $\mu_{p}(\A_{k})\geq \frac{1}{2} \mu_{p}(\A)$  this gives us immediately the first case of the theorem.

If $\mu_{p}(\A_{k})< \frac{1}{2} \mu_{p}(\A)$ then there must be some $m$ for which $\mu_{p}(\A_{m})-\mu_{p}(\A_{m+1})\geq \frac{1}{2^{m+1}}\mu_{p}(\A)$. Let us show that in this case $\B_{m}$ is a tame $m-$approximation as guaranteed in the second case of the theorem. 

Taking $\A'$ to be $\A_m\subset \A$ we see that from the definition of $\B_m$ one has that for any $B\in \B_m$ $|N_{\A_m}^{m}(B)|> (p/2)^{-m}$ so we only need to show that $N_{\A_m}^{m}(B)$ is tame. 
Indeed, denote $\N$ to be the family spanned by $N^{m}_{\A_m}(B)$ and assume there is some $U\subset [n]$ and some $l<m$ such that $|N_{\N}^{l}(U)|\geq (p/2)^{-l}$, then $|N^{l}_{\A_m}(U\cup B)|\geq |N_{\N}^{l}(U)|\geq (p/2)^{-l}$ and thus $B\cup U \in \B_{l}$ for $l<m$ in contradiction to the definition of $\A_m$.

Finally note that $\mu_{p}(\B_m)=\mu_{p}(\A_{m})-\mu_{p}(\A_m+1)\geq \frac{1}{2^{m+1}}\mu_{p}(\A)$. From this and the above $\B$ is as guaranteed in the second case of the theorem.

\end{proof}

\begin{proof}[Proof of Corollary \ref{cor:p/2top}] 
We will use induction on the size of the minterms. For $k=1$ we note that $\A$ is tame by definition, so we can directly apply Lemma \ref{lemma:tameFamily} and together with the first moment we get the required inequality:
$$
\mu_{p/2}(\A) \geq \frac{\min\{\E_{p/2}[X^{\A}],1\}}{2}\geq \frac{\min\{\E_{p}[X^{\A}],1\}}{4}\geq \frac{1}{4}\mu_{p}(\A)
$$

Now assume we have proved for any $\ell <k$ and let us proof for $k$. Theorem \ref{thm:StrTheorem} gives us two options. If we have the first one, then there is a tame family $\B$ which is a subfamily of $\A$ and $\mu_p(\B)\geq \frac{1}{2} \mu_p(\A)$. Then together with Corollary \ref{cor:TameApp} we have:
$$
\mu_{p/2}(\A)\geq \mu_{p/2}(\B) \geq  \frac{\mu_p(\B)}{k2^k} \geq \frac{\mu_p(\A)}{k2^{k+1}}
$$
which is stronger then the required inequality.

If we are in the second case of the theorem note that any $B\in \M (\B)$ has supplements of size $m$ and so the size of each minterm in $\B$ is at most $k-m$. Thus we can use the induction assumption on $\B$ and get that $ \mu_{p/2}(\B)\geq \frac{1}{(k-m)2^{3(k-m)-1}} \mu_{p}(B) $. 

Recalling that from Theorem \ref{thm:StrTheorem}  $\mu_p{\B}\geq \frac{1}{2^{m+1}}\mu_{p}(\A)$, it is left to apply Corollary \ref{cor:TameApp} and get:
$$
\mu_{p/2}(\A)\geq \frac{1}{m2^m}\mu_{p/2}(\B)\geq \frac{1}{m(k-m)2^{3k-2m-1}} \mu_{p}(B)\geq \mu_{p}(\A)\frac{1}{m(k-m)2^{3k-m}}
$$
a simple calculation will give us the fact that $\frac{1}{m(k-m)2^{3k-m}} \geq \frac{1}{k2^{3k-1}}$ and so we get :
$$
\mu_{p/2}(\A)\geq \frac{1}{k2^{3k-1}}\mu_{p}(\A)
$$
as required
\end{proof}

\section{Fractional expectation and the expectation threshold}\label{sec:LP}
The technique we apply in this section, using duality of linear programming, follows an idea presented by Talagrand in the same context (see "weakly $p$-small'' vs. "$p$-spread'' in \cite{Talagrand}). This leads to a calculation of a weighted-second-moment, as done by Lyons in \cite{Lyons}.

We begin by proving Theorem \ref{thm:LP}. 
\\ {\bf Proof:}  Let $\A \subseteq P([n])$ be a monotone family  with $\M = \M(\A)$ as a set of minterms, 
all of which have size at most $k$. Note that whenever $f : P([n]) \rightarrow \mathbb{R}^{\geq0}$ assumes values greater than 1 on all $A \in \A$ then $\E(f) \geq \mu(\A)$, and whenever $g : P([n]) \rightarrow \mathbb{R}^{\geq0}$ has its support contained in $\A$ then by Payley-Zygmund, 
\beq{PZg}
\mu(\A) \geq \frac {\E(g)^2}{\E(g^2)}.
\enq

 To get good control on 
$\mu(\A)$ it makes sense to try and find such a function $f$ which is as small as possible, and a function $g$ for which the second moment is well behaved (say, not too much weight on the upset generated by any single set, a quantity that arises naturally when calculating the second moment.)
The trick will be to relate these two functions via LP duality.  First, for $q \in [0,1]$, define
$$
{\E}^*_q (\A) =\min \sum \beta(B) q^{|B|}, 
$$
where the minimum is taken over all functions $\beta : P([n]) \rightarrow \mathbf{R}^{\geq 0}$ such that 
\beq{eq:primal}
\sum_{B \subseteq A} \beta(B) \ge 1 \text { for all } A \in \M.
\enq
By LP duality ${\E}^*_q (\A) = L_q^*(\A)$ where
$$
L^*_q (\A) =\max \sum_{A \in \M} \nu(A), 
$$
where the maximum is taken over all functions $\nu : \M(\A) \rightarrow \mathbf{R}^{\geq 0}$ such that 
\beq{eq:dual}
\sum_{B \subseteq A} \nu(A) \leq q^{|B|} \text { for all } B.
\enq
Now, for any $p,q \in (0,1)$ we let $ \alpha = q/p$ and proceed to relate $\mu_p$ and ${\E}^*_q$.
Let  $\nu$ be a function achieving the maximum in the definition of $L^*_q (\A) $ and define 
$$
g(X) = \sum_{A \in \M} \nu(A) p^{-|A|} {\bf 1}_{A \subseteq X}
$$
 and note that 
\beq{eq:numerator}
{\E}_p(g) =  L_q^*(\A). 
\enq
So to complete the calculation of a Payley-Zygmond type lower bound on $\mu_p(A)$ what is left is to calculate
$\E_p(g^2)$. (This is the "weighted second moment" calculation, as in \cite{Lyons}.)  With $A,B$ running over $\M(\A)$ we have
$$
{\E}_p(g^2)  =  \sum \sum \nu(a)\nu(B)p^{- |A \cap B|} 
$$
$$
 \leq \sum_{I \subseteq [n]} p^{-|I|} \left(    \sum_{A \supseteq I} \nu(A)   \right)^2  
$$
 $$
 \leq  \sum_i p^{-i} \left( \max_{|I|=i} \sum_{A \supseteq I} \nu(A) \right)   
  \left(   \sum_{|I|=i} \sum_{A \supseteq I} \nu(A)   \right)     
$$
\beq{eq:defofL}
\sum_i p^{-i}(\alpha p)^i \sum_A \nu(A) \C{|A|}{i}
\enq
$$
\leq \sum_{A} \nu(A) \sum_i \alpha^i \C{|A|}{i}
$$

$$
\sum_{A} \nu(A)(1+\alpha)^{|A|}
$$
\beq{eq:another}
\leq L^*_q(\A) (1+\alpha)^k
\enq
where (\ref{eq:defofL}) follows from (\ref{eq:dual}), and (\ref{eq:another}) follows from the definition of $\nu$ and the fact that all minterms are of size at most $k$.
We now use (\ref{eq:another}) and (\ref{eq:numerator}) in (\ref{PZg}), together with the fact that $L^*={\E}^*$:
$$
\mu_p(\A) \geq \frac {\E(g)^2}{\E(g^2)}  \ge \frac{(L^*_q(\A))^2}{  L^*_q(\A) (1+\alpha)^k} = {\E}^*_{\alpha p } (\A)(1+\alpha)^{-k}.
$$ 
\qed

A nice feature of Theorem \ref{thm:LP} is that it gives sufficient control over the rate of change of $\mu_p$ (as a function of $p$) to give both lower and upper bounds. This is embodied in the following corollary
which also implies theorem\ref{thm:upperbound}.
\begin{corollary}
Let $b < a$. Then 
$$
\left(\frac a b\right)^{1/k} - 1 \leq \frac{p_a}{p_b} \leq \frac {k^k}{(k-1)^{k-1}}\frac {a}{b}.
$$
\end{corollary}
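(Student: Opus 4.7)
My approach is to derive both inequalities from Theorem~\ref{thm:LP}, $\E^{*}_p(\A) \ge \mu_p(\A) \ge \E^{*}_{\alpha p}(\A)(1+\alpha)^{-k}$, by specialising at carefully chosen parameters. The lower bound will be essentially immediate; the upper bound will additionally require an LP-dual scaling step that upgrades the trivial inequality $\E^{*}_{p_b}(\A) \ge b$ into a usable lower bound on $\E^{*}_q(\A)$ for $q \ge p_b$.

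For the lower bound I apply the theorem at $p = p_b$ with $\alpha = p_a/p_b$, so that $\alpha p = p_a$. Combining with $\E^{*}_{p_a}(\A) \ge \mu_{p_a}(\A) = a$ gives $b \ge a(1 + p_a/p_b)^{-k}$, which rearranges to $(a/b)^{1/k} \le 1 + p_a/p_b$, i.e.\ the stated lower bound.

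For the upper bound I set $p^{*} := \frac{k^k}{(k-1)^{k-1}}\cdot\frac{a}{b}\cdot p_b$ and aim to show $\mu_{p^{*}}(\A) \ge a$, whence $p_a \le p^{*}$. Applying Theorem~\ref{thm:LP} at $p^{*}$ with $\alpha = 1/(k-1)$ --- precisely the minimiser of $(1+\alpha)^k/\alpha$ and the source of the constant $k^k/(k-1)^{k-1}$ --- yields
$$
\mu_{p^{*}}(\A) \ge \E^{*}_{cp_b}(\A)\cdot\left(\frac{k-1}{k}\right)^{k},\qquad c := \frac{p^{*}}{(k-1)p_b} = \left(\frac{k}{k-1}\right)^{k}\frac{a}{b},
$$
so it suffices to prove $\E^{*}_{cp_b}(\A) \ge a(k/(k-1))^{k}$. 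I will obtain this via LP duality: if $\nu$ is an optimal dual solution realising $\E^{*}_{p_b}(\A)$, then the scaled $c\nu$ satisfies every dual constraint at level $cp_b$ except possibly the one at $B = \varnothing$, which imposes $c\sum_A\nu(A) \le 1$. This yields
$$
\E^{*}_{cp_b}(\A) \ge \min\bigl(c\,\E^{*}_{p_b}(\A),\,1\bigr) \ge \min(cb,\,1),
$$
and when $cb \le 1$, i.e.\ $a \le ((k-1)/k)^{k}$, this equals $cb = a(k/(k-1))^{k}$, as required.

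The main obstacle I anticipate is the complementary regime $a > ((k-1)/k)^{k}$, in which $cb > 1$, the dual-scaling bound saturates at $1$, and the naive choice $\alpha = 1/(k-1)$ yields only $\mu_{p^{*}}(\A) \ge ((k-1)/k)^{k}$, falling short of $a$. My plan there is to reparametrise by $\alpha = (1/a)^{1/k} - 1 \le 1/(k-1)$ so that $(1+\alpha)^{-k} = a$ exactly, and to invoke $\E^{*}_q(\A) = 1$ as soon as $q \ge p_b/b$ (again by dual scaling, with the $B = \varnothing$ constraint now binding). The calculus identity $\max_{a\in(0,1)} a\bigl((1/a)^{1/k} - 1\bigr) = (k-1)^{k-1}/k^{k}$, attained at $a = ((k-1)/k)^{k}$, shows that the two regimes match exactly at the boundary; establishing the corollary's uniform constant throughout the upper regime looks like the technical crux, and may require combining the LP-dual bound with the monotonicity of $\mu(\A,p)/p^{k}$ from Theorem~\ref{thm:decrease}.
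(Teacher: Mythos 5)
Your lower bound is correct and is exactly the paper's argument: apply Theorem~\ref{thm:LP} at $p=p_b$ with $\alpha=p_a/p_b$, use $\E^{*}_{p_a}(\A)\ge\mu_{p_a}(\A)=a$, and rearrange. For the upper bound you follow the same general strategy as the paper (invert $\E^{*}$, scale the LP dual, optimise over $\alpha$), and you have correctly put your finger on a genuine gap --- and it is a gap that the paper's own published proof also contains without comment.

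Concretely: the paper writes $p_a\le q_{a(1+\alpha)^k}/\alpha\le q_b\frac{(1+\alpha)^k}{\alpha}\frac{a}{b}\le p_b\frac{(1+\alpha)^k}{\alpha}\frac{a}{b}$ and then sets $\alpha=1/(k-1)$. But $\E^{*}_q(\A)\le 1$ for every $q$, since $\beta(\varnothing)=1$ and all other weights zero is feasible with objective $1$; this is exactly why the dual constraint at $B=\varnothing$ in the paper's proof of Theorem~\ref{thm:LP} reads $\sum\nu(A)\le 1$. Hence $q_x$ is undefined for $x>1$, the scaling inequality $q_x\le q_y\,x/y$ holds only for $x\le 1$ (its proof, scaling an optimal dual $\nu$ by $x/y$, violates the $B=\varnothing$ constraint otherwise), and the paper's chain is valid only for $\alpha\le(1/a)^{1/k}-1$. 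Since $((k-1)/k)^k<1/e<1/2$ for all $k\ge 1$, the choice $\alpha=1/(k-1)$ is out of range precisely when $a>((k-1)/k)^k$, which includes $a=1/2$ --- the very instantiation used to deduce Theorem~\ref{thm:upperbound}. Your calculus observation, that $a\bigl((1/a)^{1/k}-1\bigr)$ is \emph{maximised} (not minimised) at $a=((k-1)/k)^k$ with value $(k-1)^{k-1}/k^k$, shows the constrained optimum $\alpha=(1/a)^{1/k}-1$ yields only $p_a/p_b\le 1/\bigl(b((1/a)^{1/k}-1)\bigr)$, which is \emph{larger} than $\tfrac{k^k}{(k-1)^{k-1}}\tfrac{a}{b}$ throughout the upper regime. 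The gap is not cosmetic: for $k=1$ the corollary as stated actually fails --- take $\A$ the family of nonempty subsets of $[n]$ with $n$ large; then $p_{1/2}/p_b\to(\ln 2)/(-\ln(1-b))\approx(\ln 2)/b$, exceeding the claimed bound $a/b=1/(2b)$. Your proposed remedy is the right direction, but as you acknowledge it is not yet a proof, and invoking the monotonicity of $\mu(\A,p)/p^k$ does not obviously supply the missing upper-bound control (that theorem only bounds how fast $\mu$ can grow, which is a lower bound on $p_a/p_b$). So the honest conclusion is that both your argument and the paper's establish the corollary only for $a\le((k-1)/k)^k$, and the constant for larger $a$ needs to be replaced by $1/\bigl(a((1/a)^{1/k}-1)\bigr)$.
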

Note that setting $b:=\epsilon, a:=1/2$ implies theorem\ref{thm:upperbound}. Also, note that Theorem 
\ref{thm:decrease} yields the bound $ \left(\frac a b\right)^{1/k}  \leq \frac{p_a}{p_b}$, so that when $a/b$ is large this is almost as good.
\\ {\bf Proof:}
For the lower bound observe that Theorem \ref{thm:LP} implies for any $p$ and $\alpha$
$$
 \frac {\mu_{\alpha p}}{(1+\alpha)^k} \leq \mu_p .
$$
Setting  $p:=p_b$ and $\alpha:= p_a/ p_b$ gives the required result.

For the upper bound it is useful to use the inverse function to ${\E}^*$. Let $q_x$ be the value $q$ for which
${\E}^*_q(\A)=x.$ Theorem \ref{thm:LP} implies for any $x$ and $\alpha$
$$
p_x \le \frac{q_{x(1+\alpha)^k}}{\alpha}.
$$
Also, it is easy to see that for $y \le x$ it holds that $q_x \le q_y \frac{x}{y}$.
Furthermore, for every $x \le 1$ we have $q_x \le p_x$.
Putting these together gives
$$
p_a \le \frac{q_{a(1+\alpha)^k}}{\alpha} \le q_b \frac{(1+\alpha)^k}{\alpha}\frac{a}{b} 
\le p_b \frac{(1+\alpha)^k}{\alpha}\frac{a}{b}.
$$
The function $\frac{(1+\alpha)^k}{\alpha}$ is minimized at $\alpha = 1/(k-1)$. Plugging this value of $\alpha$ into the above expression yields the result.
\qed

\remove
{
First some easy facts.
\begin{proposition}\label{prop}
\begin{enumerate}
\item Let $b < a$. Then 
$$
\frac{p_a}{p_b} \leq \frac{{\E}^*_{p_a}}{{\E}^*_{p_b}}.  
$$ 
\item For all $p$, 
$$ 
\mu_p \leq {\E}^*_p .
$$
\item For all $p$ and $\alpha >0$ 
$$
 \frac {\mu_{\alpha p}}{(1+\alpha)^k} \leq \mu_p .
$$
\end{enumerate}
\end{proposition}
{\bf Proof:}
\begin{enumerate}
\item Follows from the definition of ${\E}*$. Any function $\beta$ which achieves the minimum in the definition of 
${\E}^*_{p_a}$ will yield at most $ \frac{p_b}{p_a} {\E}^*_{p_a}$ when plugged into the expression
defining ${\E}^*_{p_b}$.
\item Follows from the fact  that whenever $f : P([n]) \rightarrow \mathbb{R}^{\geq0}$ assumes values greater than 1 on all $A \in \A$ then $\E(f) \geq \mu(\A)$.
\item Follows from combining the previous item with Theorem \ref{thm:LP}.
\end{enumerate}
\qed

\begin{corollary}\label{corcor}
Let $b < a$. Then 
$$
\left(\frac a b\right)^{1/k} - 1 \leq \frac{p_a}{p_b} \leq 2^k \frac {a}{b}.
$$
\end{corollary}
Note that setting $b= \epsilon$ and $a = 1/2$ yields theorem\ref{thm:upperbound}. Also, note that Theorem 
\ref{thm:decrease} yields the bound $ \left(\frac a b\right)^{1/k}  \leq \frac{p_a}{p_b}$, so that when $a/b$ is large this is almost as good.

{\bf Proof:} For the upper bound we have 
$$
\frac{p_a}{p_b} \leq  \frac{{\E}^*_{p_a}}{{\E}^*_{p_b}} \leq \frac {{\E}^*_{p_a}}{b} \leq 2^k\frac {a}{b},
$$
where the first inequality is item 1 in proposition \ref{prop}, the second is item 2, and the follows from setting $p:=p_a, \alpha:=1$ in Theorem \ref{thm:LP}.
For the lower bound we set $\alpha = p_a/ p_b$. Then item 3 in proposition \ref{prop}, with $p:= p_b$ gives
$ b \ge a (1+p_a/p_b)^k $ and the result follows.
\qed
}

\subsection*{Acknowledgements}
The authors wish to thank Oliver Riordan for pointing out to us Theorem \ref{thm:decrease} and its proof.
We would also like to thank Gil Kalai for useful conversations.

\end{document}